\documentclass[conference]{IEEEtran}
\IEEEoverridecommandlockouts
\usepackage{cite}
\usepackage[T1]{fontenc}
\usepackage{amsmath,amssymb,amsfonts, amsthm}
\usepackage{algorithmic}
\usepackage{graphicx}
\usepackage{textcomp}
\usepackage{xcolor}
\usepackage{dsfont}
\usepackage{booktabs}
\usepackage{hyperref}
\usepackage[caption=false,font=footnotesize]{subfig}
\def\BibTeX{{\rm B\kern-.05em{\sc i\kern-.025em b}\kern-.08em
    T\kern-.1667em\lower.7ex\hbox{E}\kern-.125emX}}

\DeclareMathOperator{\sgn}{sgn}

\newtheorem{definition}{Definition}
\newtheorem{prop}{Proposition}
\newtheorem{theorem}{Theorem}
\newtheorem{lemma}{Lemma}

\begin{document}

\title{\LARGE
A Framework for Exploring Social Interactions in Multiagent Decision-Making for Two-Queue Systems
}

\author{Mallory E. Gaspard, Naomi Ehrich Leonard% <-this % stops a space
\thanks{The work in this paper was supported by a gift from William H. Miller III. Research was sponsored by the Army Research Office and was accomplished under Grant Number W911NF2410126. The views and conclusions contained in this document are those of the authors and should not be interpreted as representing the official policies, either expressed or implied, of the Army Research Office or the U.S. Government. The U.S. Government is authorized to reproduce and distribute reprints for Government purposes notwithstanding any copyright notation herein.}% <-this % stops a space
%\thanks{NDSEG}% <-this % stops a space
\thanks{M.~Gaspard is with the Dept. of Mechanical and
Aerospace Engineering and the Dept. of Ecology and Evolutionary Biology at Princeton University, Princeton, NJ, 08544 USA; \href{mailto:mallory.gaspard@princeton.edu}{mallory.gaspard@princeton.edu}.}%
\thanks{N.~E.~Leonard is with the Dept. of Mechanical and
Aerospace Engineering at Princeton University, Princeton, NJ, 08544 USA; \href{mailto:naomi@princeton.edu}{naomi@princeton.edu}. }%
}

\maketitle

\maketitle

\begin{abstract}
We introduce a new framework for multiagent decision-making in queueing systems that leverages the agility and robustness of nonlinear opinion dynamics to break indecision during queue selection and to capture the influence of social interactions on collective behavior. Queueing models are central to understanding multiagent behavior in service settings. Many prior models assume that each agent's decision-making process is optimization-based and governed by rational responses to changes in the queueing system. Instead, we introduce an internal opinion state, driven by nonlinear opinion dynamics, that represents the evolving strength of the agent's preference between two available queues. The opinion state is influenced by social interactions, which can modify purely rational responses. We propose a new subclass of queueing models in which each agent's behavioral decisions (e.g., joining or switching queues) are determined by this evolving opinion state. 
We prove a sufficient parameter condition that guarantees the Markov chain describing the evolving opinion and queueing system states reaches the Nash equilibrium of an underlying congestion game in finite expected time. We then explore the richness of the new framework through numerical simulations that illustrate the role of social interactions and an individual's access to system information in shaping collective behavior.
\end{abstract}

\section{Introduction}
From airport security screening lanes \cite{liu2025deep} to task allocation in distributed computing \cite{mitzenmacher2002power}, queueing systems are ubiquitous in everyday life. Mathematical models of these systems are crucial tools for understanding an individual's decision-making process in service settings, as well as for gaining insight into how these individual choices shape the collective behavior.
Decision-making in queueing systems typically involves agents choosing actions (e.g., joining a queue, switching queues, or leaving altogether)\cite{shortle2018fundamentals} that lead them to one of several \emph{mutually exclusive} physical states. 
Their choices are often motivated by a cost or reward associated with being in particular locations within the system. 
For example, an agent may decide to switch to a different queue if it is shorter or if another queue seems to have faster service \cite{koenigsberg1966jockeying}.  

Classical queueing frameworks typically assume that agents are \emph{rational} in deciding their actions.
Foundational models such as those presented in \cite{naor1969regulation, chr1972individual, edelson1975congestion, lin1984optimal} derive threshold-based decision policies from individual optimizations of each agent's expected reward and waiting cost. Other common approaches such as those surveyed in \cite{hassin2003queue} capture strategic decision-making in a non-cooperative game formulation and characterize desired behavior through Nash equilibria.
However, psychological studies have demonstrated that humans do not rationally respond to changes in a queueing system \cite{maister1984psychology, zhou2003looking}.
Feelings of stress, unfair treatment, and post-decision discomfort can lead to irrational decision-making by prompting unnecessary switching \cite{li2025people}, driving waiting time misperception \cite{larson1987or}, and supporting biased attachment to a particular option \cite{carmon2003option}.

Recent approaches presented in the service systems literature aim to relax the perfect rationality assumption and incorporate more realistic features of human decision-making. These approaches include models based on bounded rationality \cite{simon1955behavioral}, as well as models that specifically account for emotional influences in an individual's decision-making process, consistent with observations from behavioral economics \cite{ariely2009end}. 
For example, \cite{huang2013bounded} uses bounded rationality to capture imperfections in the agent's reasoning about the expected waiting time. 
\cite{xue2023personality} uses an SIR model to illustrate how emotions such as patience, urgency, and friendliness spread at the population level and consequently influence agents' action choices. Although useful, these approaches often aggregate the environmental and social factors that impact the decision-making process, and thus have limited capacity to describe how each agent responds \emph{individually and dynamically} to these changing factors when choosing actions.   

To address this, we formulate a queueing framework in which each agent's decision-making process is governed by a
continuously evolving \emph{internal state} modeled by a nonlinear dynamical system.
Using the \emph{nonlinear opinion dynamics} (NOD) framework developed in \cite{bizyaeva2022nonlinear} for fast and flexible multiagent decision-making in dynamic environments 
(see e.g., \cite{cathcart2023proactive, hu2025think}),
we model the agent's internal state as an \emph{opinion} that reflects their level of preference for one or the other queue in a two-queue system. 
In contrast to existing frameworks, our model makes no rationality or optimization-based assumptions about each agent's decision-making process. Instead, any rational actions that arise are \emph{emergent} phenomena as a result of the underlying opinion dynamics. 
Although this approach remains largely underexplored in the existing literature, it suggests promise in modeling finer-grained social and environmental influences on each agent's decision-making process 
and permits a wider set of collective behaviors
than its counterparts. Moreover, the NOD dynamical system has a bifurcation structure that supports multistability and hysteresis \cite{bizyaeva2022nonlinear} and provides a mechanism for \emph{neutrality breaking} in settings where an agent might find the available queues to be equally desirable choices.

Our primary contribution is the introduction of an agent based queueing model (ABQM) in which the probability of an agent successfully executing a maneuver to one of two available queues is directly informed by their evolving opinion state (Section \ref{sec:nod_queues}). 
Given the range of possible collective behaviors that can arise from the NOD-driven maneuver dynamics, we discuss the interpretation of queue selection as a congestion game and highlight how Nash-equilibrium queue membership configurations can emerge from this dynamic process (Section \ref{sec:choice_and_games}). We cast the ABQM as a Markov chain and utilize standard martingale and probability arguments to prove a sufficient parameter condition that permits the system to reach a Nash equilibrium in finite expected time (Section \ref{sec:nash_hitting}). Although our condition is restrictive, we illustrate through numerical simulation the existence of a wider parameter regime which leads to the same results, and we highlight scenarios in which the ABQM behaves consistently with a stochastic learning process in congestion games (Section \ref{sec:numerical_sims}). We briefly conclude and mention future directions in Section \ref{sec:conclusion}

\section{Opinion-Driven Selection Between Two Queues}\label{sec:nod_queues}
Consider a queueing system with two single-server queues ($A$ and $B$) and a waiting pool $W$ for agents who have not yet joined a queue. 
The system starts with $N$ agents, and no new agents are allowed to enter the system.
Once an agent joins a queue, they are allowed to switch between queues until they are served.
We assume that each server operates according to a Poisson point process with rate $\mu_q, q \in \{A, B\}$ that serves agents uniformly at random when a service event occurs.
Agents leave the system completely after service. 

\textit{Notation:} We denote vectors and matrices in \textbf{bold}. Agent $i$'s physical state at time $t$ is $\ell_i(t) \in \{0, \pm1\}$ 
where $0, +1,$ and $-1$ index $W$, $A$, and $B$, respectively.
Agent $i$'s internal state is represented by the opinion $z_i(t) \in [-1,1]$.
The physical and internal states of the $N$-agent group at time $t$ are encoded in the vectors $\boldsymbol{\ell}(t) := (\ell_1(t), \dots, \ell_N(t)) \subseteq \{0, \pm 1\}^N$ and $\boldsymbol{z}(t) := (z_1(t), \dots, z_N(t)) \subseteq [-1, 1]^N$, respectively. The social connections between each agent are represented by an $N$-node network described by the adjacency matrix $\boldsymbol{A} := [a_{ij}] \in \mathbb{R}^{N \times N}$. $a_{ij}$ reflects the strength of the influence that agent $j$ has on the internal state of agent $i$, and $\sgn(a_{ij})$ indicates whether this influence is attractive ($+$) or repulsive ($-$). For simplicity, we restrict $a_{ij} \in \{0, \pm1\}$. 
The matrix infinity norm is defined as $\|\boldsymbol{A}\|_{\infty} := \max_i \sum_{j}|a_{ij}|$.
Lastly, for $a, b \in \mathbb{R}$, the statement $a \wedge b$ is equivalent to $\min(a,b)$.
\subsection{Nonlinear Opinion Dynamics}
We utilize \emph{nonlinear opinion dynamics} (NOD) \cite{bizyaeva2022nonlinear} to model the continuous evolution of each agent's internal state. 
Here, the opinion quantifies the agent's preference for one of two \emph{mutually exclusive} options ($A$ or $B$) and is influenced by self-reinforcement, social connections, and the changing environment.
Following the setup for the NOD system presented in \cite{bizyaeva2022nonlinear}, 
agent $i$'s opinion evolves according to the ordinary differential equation
\begin{equation}\label{eqn:nod_queue_ode}
   \dot{z}_i = -\lambda_iz_i + \tanh \left(\omega_i u_i  z_i + \alpha_i \sum^N_{j = 1, j\neq i}a_{ij}\hat{z}_j - \gamma_i b(\cdot) \right) 
\end{equation}
where $u_i = u_{0_i} + Kz^2_i$ represents the \emph{attention} that agent $i$ pays to its own internal state, $u_{0_i} \in \mathbb{R}$ is agent $i$'s basal attention, and $K 
\ge 0$ is the gain. $\hat{z}_j$ is agent $i$'s estimate of agent $j$'s opinion. In practice, $\hat{z}_j$ can be estimated by rapid analysis of sensory signals such as body orientation, gaze, and hand gestures \cite{langton2000eyes, csibra2007obsessed}.
The parameter $\lambda_i \in \mathbb{R}_{0,+}$ is a damping coefficient, $\omega_i \in \mathbb{R}_{0,+}$ is a self-reinforcement weight, $\gamma_i$ weighs the importance of environmental information, and $\alpha_i \in \mathbb{R}_{0,+}$ is the social influence weight. The function $b(\cdot)$ captures the  time-varying environmental input that agent $i$ receives from observations about the queueing system state (e.g., queue imbalance, wait time difference, or a combination of other metrics). 
In the ABQM, agents make decisions that affect queue states at discrete times, so Equation \eqref{eqn:nod_queue_ode} is formally a hybrid dynamical system in which the opinions evolve in continuous time, while the environmental input $b(\cdot)$ is piecewise constant between the decision epochs. 

When $b(\cdot) = 0$ and the basal attention is below a critical value, the neutral state ($\boldsymbol{z} = \boldsymbol{0}$) is a stable equilibrium of Equation \eqref{eqn:nod_queue_ode}. As shown in \cite{bizyaeva2022nonlinear}, this equilibrium can be \emph{destabilized} when the basal attention exceeds a critical threshold $\boldsymbol{u}^*_0$ or the input $b(\cdot)$ is large enough, i.e., greater than a $u_0$-dependent implicit threshold. 
Destabilization then induces opinion formation through a \emph{pitchfork} bifurcation, which also serves as a \emph{neutrality breaking} mechanism when agents regard all options as equally preferable.  
When the attention gain $K$ lies below a critical value $K^*$, the resulting bifurcation is a supercritical pitchfork with two stable branches corresponding to a strong preference for $A$ ($z_i > 0$) and a strong preference for $B$ ($z_i < 0$).
When the population is \emph{homogeneous}, closed-form expressions for $\boldsymbol{u}^*_0$ and $K^*$ are available. Here, $u^*_{0_i} = \frac{\lambda - \alpha \overline{\sigma}(\boldsymbol{A})}{\omega}$, where $\overline{\sigma}(\boldsymbol{A})$ is the largest eigenvalue of $\boldsymbol{A}$. A Lyapunov-Schmidt reduction further reveals that the bifurcation is a supercritical pitchfork when $K < K^* = \frac{\lambda^3}{3 \omega}$.  

\subsection{Agent Based Opinion-Driven Queueing Model}
We model the queueing process as an agent-based model over a finite time horizon $[0,T]$. 
While each agent's opinion evolves continuously according to Equation \eqref{eqn:nod_queue_ode}, 
agents only select actions (e.g., join or switch) at discrete decision epochs, $k \in \{0, \dots, M\}$. 
Decision epochs have equal spacing $\Delta t_D$, and the time \emph{between} epochs is a decision interval, $[t_{k}, t_{k+1})$. 

Let $z^k_i$ denote the opinion of agent $i$ at $t_k$. Similarly, let $\ell^k_i$ represent agent $i$'s queue membership at $t_k$, and $b_k$ be input from the environment at $t_k$. The global state of the system at epoch $k$ is $(\boldsymbol{z}_k, \boldsymbol{\ell}_k)$.
The opinion state ODEs are continuously integrated over the decision interval $[t_k, t^-_{k+1}]$ where $t^-_{k+1}$ denotes the instant before the next epoch. The updated opinion value at the end of the interval is $z^{k+1}_i$.

Actions are executed immediately before the start of the next decision interval. 
The outcomes are \emph{probabilistic}, and the success probability is directly informed by $z^{k+1}_i$. 
At the end of the decision interval (just prior to $t_{k+1}$), every agent \emph{independently} draws one value $U^k_i \sim \text{Unif}(0,1)$. If agent $i$ is in $W$ at step $k$, then they will join a queue whenever 
\begin{equation}
    U^k_i \leq |z^{k+1}_i|
\end{equation}
where  $\sgn(z^{k+1}_i)$ matches the index of the destination queue.
If $i \in A_k \cup B_k$, they switch queues whenever 
\begin{equation}
    U^k_i \leq |z^{k+1}_i|\mathds{1}_{\{\sgn(z^{k+1}_i) \neq \ell^k_i\}}.
\end{equation}
The queue membership $\boldsymbol{\ell}_{k+1}$ is determined by the action outcomes, and $b_{k+1}$ is updated using the resulting membership counts.
The process repeats through all decision intervals. 

Since each agent's movement probability depends only on the current state $(\boldsymbol{z}_k, \boldsymbol{\ell}_k)$, and we use a time-explicit numerical integration scheme on the system defined in \eqref{eqn:nod_queue_ode} to obtain $\boldsymbol{z}_{k+1}$, the joint state $\mathcal{X}_k := (\boldsymbol{z}_k, \boldsymbol{\ell}_k)$ evolves as a time-homogeneous Markov chain on the state space $\mathcal{S} := [-1, 1]^N \times \{0, \pm 1\}^N$. 
This representation allows us to use techniques from stochastic processes to analyze collective behavior. 
The model also has a meaningful game theoretic interpretation. 
In particular, when $b_k$ captures queue membership difference, and once $W$ is empty, $\boldsymbol{\ell}_k$ can be interpreted as a strategy profile in which each agent chooses a queue and incurs a cost based on the resulting queue membership counts. 
This naturally describes an underlying \emph{congestion game}, and we aim to characterize parameter regimes and properties of the social network where the NOD-driven, stochastic movement dynamics lead to emergent queue configurations corresponding to the Nash equilibria. 

\section{Queue Selection and Congestion Games}\label{sec:choice_and_games}
We examine the connection between the NOD-driven ABQM and congestion games in the two-queue setting. Congestion games capture scenarios where agents must \emph{strategically} choose from a set of available resources with usage-dependent costs \cite{roughgarden2005selfish}. In our setting, each queue is an available resource with membership-dependent cost.
Rosenthal showed in \cite{rosenthal1973class} that congestion games are potential games \cite{monderer1996potential} and thus admit at least one pure-strategy Nash equilibrium.

\begin{definition}[Queue Selection Congestion Game]\label{def:congestion_game_queue_choice}
Consider $N$ agents choosing between two queues, $A$ and $B$.
Let $n_A$ and $n_B$ be the number of agents in $A$ and $B$, respectively.  
Denote the resource costs for $A$ and $B$ as $c_A(n_A)$ and $c_B(n_B)$, and assume they are nondecreasing functions of their arguments.
Each agent selects a queue, and the resulting membership vector $\boldsymbol{\ell} \in \{-1, 1\}^N$ is a Nash equilibrium if no agent can unilaterally decrease their cost by switching queues.
\end{definition}     
When resource costs are the same across queues, it is straightforward to determine the Nash equilibrium analytically.
\begin{prop}\label{pro:nash_configs}
Consider a congestion game where $N$ agents are choosing between two queues with identical resource costs, $c_A = c_B = c(\cdot)$. A queue membership configuration $(n_A, n_B)$ is a Nash equilibrium if and only if $|n_A - n_B| \leq 1$. 
\end{prop}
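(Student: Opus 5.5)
The plan is to reduce the Nash condition to the two possible unilateral deviations and then compare costs using monotonicity of $c$. Fix a configuration $\boldsymbol{\ell}$ with counts $(n_A, n_B)$, $n_A + n_B = N$.

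First, I would translate the Nash condition into inequalities on $c$. An agent in $A$ currently pays $c(n_A)$. If it switches unilaterally, it joins $B$, which then has $n_B + 1$ members, so it would pay $c(n_B + 1)$. Symmetrically, an agent in $B$ pays $c(n_B)$ and would pay $c(n_A + 1)$ after switching. Since all agents in the same queue face the same comparison, $(n_A, n_B)$ is a Nash equilibrium if and only if
\begin{equation*}
n_A \ge 1 \Rightarrow c(n_A) \le c(n_B+1), \qquad n_B \ge 1 \Rightarrow c(n_B) \le c(n_A+1).
\end{equation*}

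For sufficiency, assume $|n_A - n_B| \le 1$. Then $n_A \le n_B + 1$ and $n_B \le n_A + 1$. Because $c$ is nondecreasing, $c(n_A) \le c(n_B+1)$ and $c(n_B) \le c(n_A+1)$. Both deviation inequalities therefore hold, and no agent can lower its cost by switching.

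For necessity, suppose $|n_A - n_B| \ge 2$; by symmetry take $n_A \ge n_B + 2$, so $n_A \ge 1$. An agent in $A$ who switches pays $c(n_B+1)$ with $n_B + 1 < n_A$. To conclude that this strictly improves on $c(n_A)$, I need $c(n_B+1) < c(n_A)$. This step is the main obstacle. Mere monotonicity does not give it: if $c$ is constant, every configuration is a Nash equilibrium, and the ``only if'' direction fails.

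I would therefore make explicit the standing interpretation that $c$ is strictly increasing. This is natural for waiting costs, where, for example, the expected wait grows with queue length. Under strict monotonicity, $n_B + 1 < n_A$ gives $c(n_B+1) < c(n_A)$, so a profitable deviation exists and the configuration is not a Nash equilibrium. A slightly weaker hypothesis would also suffice: $c(m) < c(m')$ whenever $m' \ge m + 1$ within the relevant range $0, \dots, N$. I would state the proposition under strict monotonicity and add a remark that, for merely nondecreasing costs, the condition $|n_A - n_B| \le 1$ remains sufficient but not necessary.
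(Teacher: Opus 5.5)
Your proof is correct and follows the same route as the paper. Sufficiency comes from checking both unilateral deviations against monotonicity, and necessity from a profitable switch out of the longer queue when $n_A \ge n_B + 2$. Your caveat about strict monotonicity is also justified: the paper's proof derives only $c(n_A) \ge c(n_B+1)$, notes that this is strict if $c(\cdot)$ is increasing, and then concludes a strict cost decrease anyway, so it tacitly uses the strict monotonicity you make explicit (for constant $c$ every configuration is a Nash equilibrium).
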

\begin{proof}
    Suppose $|n_A - n_B| > 1$. Without loss of generality, assume $n_A > n_B + 1$, and consider an agent in $A$. If the agent in $A$ switches to $B$, then the queue membership counts become $n_A - 1$ and $n_B + 1$, and the agent incurs a cost of $c(n_B + 1)$. Since $n_A > n_B +1$ and the resource costs are nondecreasing, it follows that $c(n_A) \ge c(n_B +1)$, with a strict inequality if $c(\cdot)$ is increasing.
    Because the agent unilaterally decreased their cost by switching, this configuration cannot be a Nash equilibrium. 

    Conversely, suppose $|n_A - n_B| \leq 1$, and consider an agent in $A$. If the agent switches to $B$, then the new membership counts become $n_A - 1$ and $n_B + 1$. Since $n_B + 1 \ge n_A$ and the costs are nondecreasing, it follows that $c(n_B + 1) \ge c(n_A)$. Thus, the agent cannot unilaterally decrease their cost by switching  to $B$. The argument is symmetric for agents in $B$ switching to $A$, so the configuration $(n_A, n_B)$ is a Nash equilibrium.
\end{proof}
Unlike standard congestion game models, our agents do not directly optimize the resource costs. 
When an agent in the more expensive queue is sensitive to environmental input and $b(\cdot)$ captures queue membership imbalances, the input bias may shift their internal state and increase their switching probability. 
As such, an equilibrium queue configuration may \emph{emerge} as a result of the NOD-driven decision-making dynamics.
In this sense, our ABQM can be interpreted as a stochastic learning process whose dynamics may drive the system toward the Nash equilibrium of the underlying congestion game. 
In the next section, we establish a sufficient condition involving the NOD parameters and the social network structure to provide finite expected Nash hitting time guarantees. 

\section{Sufficient Condition for Hitting the Set of Nash Configurations in Finite Time}\label{sec:nash_hitting}
Determining when the joint-state Markov chain $\{\mathcal{X}_k\}_{k \ge 0}$ reaches a Nash configuration in finite expected time is crucial to understanding when our ABQM can be viewed as a stochastic learning mechanism and how systems of living agents may naturally settle in optimal configurations, even when they are not explicitly optimizing anything. 

Let $\Delta Q := n_A - n_B$ denote the signed queue imbalance. At a Nash equilibrium, the \emph{magnitude} of the imbalance is
\begin{equation*}
   \Delta Q_* := \begin{cases}
0, & \text{if } N \text{ is even}\\
1, & \text{if } N \text{ is odd}.
\end{cases} 
\end{equation*}
Outside of $\mathcal{N}$, the \emph{minimum} queue imbalance magnitude is $b_{\#} := \min_{\mathcal{X} \notin \mathcal{N}}\{b_k\} = 2$.
\begin{definition}[Nash Configuration Band]
    Consider the congestion game corresponding to selection between two identical queues. Let $n_A$, $n_B$, $n_W$ be the number of agents in $A$, $B$, and $W$ respectively. Denote the Markov chain state as $\mathcal{X}$. The band containing the Nash configurations for this game is
    \begin{equation}
        \mathcal{N} := \{\mathcal{X} \in \mathcal{S} \, : \, n_W = 0 \text{ and } |\Delta Q| \leq \Delta Q_*\}.
    \end{equation}
\end{definition}
We assume that every agent has complete information about the system. They can perfectly compute $\hat{z}_j$ for all other agents and they have exact knowledge of the signed queue imbalance $b_k = \Delta Q_k$ at each epoch. 
\begin{lemma}[Input Sign Alignment with the Cheaper Queue]\label{lemma:cheaper_queue_input_alignment}
    Let $\sigma_k := -\sgn(b_k)$. If 
    \begin{equation}
        \gamma_i b_{\#} \ge \omega_i(u_{0_i} + K) + \alpha_i\|\boldsymbol{A}\|_{\infty} + \overline{m}
    \end{equation}
    is satisfied for some $\overline{m} > 0$ and all agents $i$, then for any state $\mathcal{X}_k \notin \mathcal{N}$,
    \begin{equation*}
        \eta_i(t) := \sigma_k\left(\omega_i u_i z_i + \alpha_i \sum^N_{j = 1, j\neq i}a_{ij}\hat{z}_j - \gamma_i b_k \right) \ge \overline{m}
    \end{equation*}
    holds over $[t_k, t_{k+1})$. That is, the input to $\tanh(\cdot)$ in the opinion formation ODE is aligned with the cheaper queue.
\end{lemma}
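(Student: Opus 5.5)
The approach is a direct term-by-term bound on the argument of $\tanh(\cdot)$, using that every opinion stays in $[-1,1]$ and that the input $b_k$ is frozen on each decision interval. First I would record the standing facts. Since $\mathcal{X}_k \notin \mathcal{N}$, I would argue that $|b_k| \ge b_{\#} = 2$. When $n_W=0$ this holds because $|\Delta Q_k| > \Delta Q_*$ and $\Delta Q_k \equiv N \pmod 2$. When $n_W > 0$ I would rely on the paper's stipulation that $b_{\#}=2$ is the minimal imbalance magnitude outside $\mathcal{N}$. This is the point I am least sure of, since a configuration with nonempty $W$ could have small $|\Delta Q|$; I would take $b_{\#}$ as given by the definition above. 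As a consequence $b_k \neq 0$, $\sigma_k \in \{\pm 1\}$ is well defined, and $\sigma_k b_k = -|b_k|$.

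Because $b(\cdot)$ is piecewise constant, $b(t) \equiv b_k$ on $[t_k, t_{k+1})$. The box $[-1,1]^N$ is forward invariant for \eqref{eqn:nod_queue_ode}: at $z_i = \pm 1$ we have $\dot z_i = \mp\lambda_i + \tanh(\cdot)$, which points inward or is tangent when $\lambda_i \ge 1$. Otherwise I would simply invoke the state-space assumption $\boldsymbol{z}(t) \in [-1,1]^N$ built into $\mathcal{S}$. Hence $|z_i(t)| \le 1$ and $|\hat z_j(t)| \le 1$ under complete information, so $\hat z_j = z_j$.

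Next I would expand $\eta_i(t) = \sigma_k \omega_i u_i z_i + \sigma_k \alpha_i \sum_{j\neq i} a_{ij}\hat z_j + \gamma_i |b_k|$ and bound each term from below by minus its absolute value. For the self-reinforcement term, $|u_i z_i| = |u_{0_i} + K z_i^2|\,|z_i| \le u_{0_i} + K$. This uses $K \ge 0$, $z_i^2 \le 1$, and $u_{0_i} \ge 0$; if $u_{0_i}$ could be negative one would need $|u_{0_i}|$, and I would note that the intended regime has nonnegative basal attention. It follows that $\sigma_k\omega_i u_i z_i \ge -\omega_i(u_{0_i}+K)$. For the social term, $|\sum_j a_{ij}\hat z_j| \le \sum_j |a_{ij}| \le \|\boldsymbol{A}\|_\infty$, giving a lower bound of $-\alpha_i\|\boldsymbol{A}\|_\infty$. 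For the environmental term, $\gamma_i |b_k| \ge \gamma_i b_{\#}$ provided $\gamma_i \ge 0$, which the hypothesis forces since its right-hand side is positive.

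Finally I would combine these bounds: $\eta_i(t) \ge \gamma_i b_{\#} - \omega_i(u_{0_i}+K) - \alpha_i\|\boldsymbol{A}\|_\infty \ge \overline{m}$ by the hypothesis, uniformly in $t \in [t_k,t_{k+1})$ and for all $i$. The conclusion that $\sgn$ of the $\tanh$ argument equals $\sigma_k$, which points toward the shorter (cheaper) queue, follows immediately. I expect the only real obstacle to be justifying $|b_k| \ge b_{\#}$ for every state outside $\mathcal{N}$, together with the invariance of $[-1,1]^N$; the inequality chain itself is routine.
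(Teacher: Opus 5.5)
This is the same term-by-term bound as the paper's proof ($|u_iz_i|\le u_{0_i}+K$, $|\sum_{j\neq i}a_{ij}\hat z_j|\le\|\boldsymbol{A}\|_\infty$, $\sigma_k(-\gamma_i b_k)=\gamma_i|b_k|\ge\gamma_i b_{\#}$), and your explicit assumptions $u_{0_i}\ge 0$ and $\gamma_i\ge 0$ supply what the paper's one-sided step ``$u_i\le u_{0_i}+K$'' tacitly needs. Your worry about $n_W>0$ is justified (e.g.\ $\boldsymbol{\ell}_k=\boldsymbol{0}$ gives $b_k=0$, hence $\sigma_k=0$ and $\eta_i=0<\overline{m}$), but the paper's proof rests on the same claim $|b_k|\ge b_{\#}$, and the lemma is only needed, via Lemma~\ref{lemma:opinion_magnitude_bound}, in the phase of Theorem~\ref{thm:finite_hit_time} after $W$ has emptied, where your parity argument (with no service departures, so $n_A+n_B=N$) gives $|b_k|\ge 2$.
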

\begin{proof}
    Since $|z_i| \leq 1$, $u_i \leq (u_{0_i} + K)$. Also, 
    $|\hat{z}_j| \leq 1$ yields $|\sum^N_{j = 1, j\neq i}a_{ij}\hat{z}_j| \leq \|\boldsymbol{A}\|_{\infty}$. By definition of $\sigma_k$, $\sigma_k(-\gamma_i b_k) = \gamma_i|b_k|$. So, $\eta_i(t)$ satisfies
    \begin{equation*}
        \eta_i(t) \ge \gamma_i|b_k| - \omega_i(u_{0_i} + K) - \alpha_i\|\boldsymbol{A}\|_{\infty}.
    \end{equation*}
    When $\mathcal{X}_k \notin \mathcal{N}$, $|b_k| \ge b_{\#}$, thus over $[t_k, t_{k+1})$,
    \begin{equation*}
        \eta_i(t) \ge \gamma_i b_{\#} - \omega_i(u_{0_i} + K) - \alpha_i\|\boldsymbol{A}\|_{\infty} \ge \overline{m}. \qedhere
    \end{equation*}
\end{proof}

\begin{lemma}[Uniform Opinion Magnitude Bound]\label{lemma:opinion_magnitude_bound}
    Assume the condition in Lemma \ref{lemma:cheaper_queue_input_alignment} holds for all $i$ and some $\overline{m}$. 
    Recall $\Delta t_D := t_{k+1} - t_k$ and define $        \beta_i := -e^{-\lambda_i\Delta t_D} + \frac{\tanh(\overline{m})}{\lambda_i}(1 - e^{-\lambda_i\Delta t_D})$.
    If $\Delta t_D$ satisfies
    \begin{equation}
        \Delta t_D > \max_i\left\{\frac{1}{\lambda_i}\ln\left(\frac{\lambda_i + \tanh(\overline{m})}{\tanh(\overline{m})}\right)\right\}
    \end{equation}
    then, there exists a uniform constant $\beta > 0$ such that $\sigma_kz^{k+1}_i \ge \beta$ and thus $|z^{k+1}_i| \ge \beta$ for all $i$.
\end{lemma}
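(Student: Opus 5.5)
Throughout, I read the hypothesis ``$\mathcal{X}_k \notin \mathcal{N}$'' of Lemma \ref{lemma:cheaper_queue_input_alignment} as carried over to this statement, since the bound on $z^{k+1}_i$ can only come from that lemma.

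The plan is a comparison argument for the scalar ODE \eqref{eqn:nod_queue_ode} on one decision interval $[t_k, t_{k+1})$, applied to $y_i(t) := \sigma_k z_i(t)$. During the interval $b_k$ is constant, so $\sigma_k$ is a fixed sign. Multiplying \eqref{eqn:nod_queue_ode} by $\sigma_k$ and using that $\tanh$ is odd gives
\begin{equation*}
\dot y_i = -\lambda_i y_i + \tanh(\eta_i(t)),
\end{equation*}
where $\eta_i$ is the quantity from Lemma \ref{lemma:cheaper_queue_input_alignment}. That lemma gives $\eta_i(t) \ge \overline{m}$ on the whole interval. Since $\tanh$ is increasing, this yields the differential inequality
\begin{equation*}
\dot y_i \ge -\lambda_i y_i + \tanh(\overline{m}).
\end{equation*}
The lemma's bound is uniform in $z_i$ and the $\hat z_j$ because it only uses $|z_i|\le 1$ and $|\hat z_j|\le 1$. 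So the inequality holds along the actual trajectory, whatever the other agents do.

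Next comes a Grönwall/variation-of-constants step. Set $w := e^{\lambda_i (t-t_k)} y_i$; then $\dot w \ge e^{\lambda_i(t-t_k)}\tanh(\overline{m})$. Integrating over $[t_k, t_{k+1}]$ gives
\begin{equation*}
y_i(t^-_{k+1}) \ge e^{-\lambda_i \Delta t_D} y_i(t_k) + \frac{\tanh(\overline{m})}{\lambda_i}\bigl(1-e^{-\lambda_i\Delta t_D}\bigr).
\end{equation*}
The worst case for the initial condition is $y_i(t_k) = -1$, which is allowed since $z_i \in [-1,1]$. This gives $\sigma_k z^{k+1}_i \ge \beta_i$.

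It remains to show each $\beta_i > 0$ and take $\beta := \min_i \beta_i$, which is positive because there are finitely many agents. Write $\tau := \tanh(\overline{m})$. Then $\beta_i > 0$ is equivalent to $\tau(1-e^{-\lambda_i \Delta t_D}) > \lambda_i e^{-\lambda_i\Delta t_D}$, that is, $e^{\lambda_i\Delta t_D} > (\lambda_i + \tau)/\tau$. Taking logarithms, this is exactly the stated lower bound on $\Delta t_D$, imposed for every $i$. Finally, $|z^{k+1}_i| \ge \sigma_k z^{k+1}_i \ge \beta$ gives the magnitude statement. The constant $\beta$ does not depend on $k$ or on the state, which is what makes it uniform.

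The main obstacle is justifying the comparison step rigorously. The comparison needs the inequality on $\eta_i$ to hold pointwise along the evolving trajectory, and $\hat z_j$ and $u_i$ themselves vary in time. Lemma \ref{lemma:cheaper_queue_input_alignment} was proved with worst-case bounds valid at every point of $[-1,1]^N$, so it applies here. I also need the invariance of $[-1,1]$ under \eqref{eqn:nod_queue_ode}. This holds for $\lambda_i \ge 1$: at $z_i = \pm 1$ the vector field points inward since $|\tanh| < 1$. Otherwise I would simply take $|z_i|\le 1$ as part of the model's state space. Two smaller points need care. The bound $u_i \le u_{0_i} + K$ inside the lemma quietly uses $K \ge 0$. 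The lemma also needs $\lambda_i > 0$, which is required for the $\ln$ expression to make sense.
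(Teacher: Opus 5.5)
Your proof is correct and follows essentially the same route as the paper. Both arguments substitute $y_i = \sigma_k z_i$ and use the oddness and monotonicity of $\tanh$ together with Lemma~\ref{lemma:cheaper_queue_input_alignment} to obtain $\dot y_i + \lambda_i y_i \ge \tanh(\overline{m})$. Both then integrate with the factor $e^{\lambda_i t}$ over one decision interval, take the worst case $y_i(t_k) = -1$, and set $\beta := \min_i \beta_i$. Your explicit check that $\beta_i > 0$ is equivalent to the stated bound on $\Delta t_D$ spells out algebra the paper only asserts. So do your remarks on inheriting $\mathcal{X}_k \notin \mathcal{N}$ and on invariance of $[-1,1]$, which is automatic only when $\lambda_i \ge 1$.
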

\begin{proof}
    To avoid treating the sign cases for $b_k$ separately, recall $\sigma_k := -\sgn(b_k)$ and define $y_i(t_k) = \sigma_kz^k_i$. Since $b_k$ is constant on $[t_k, t_{k+1})$, $\sigma_k$ is also constant over the interval.
    Let $x_i$ denote the input of $\tanh(\cdot)$ and $\tanh(\overline{m}) = \nu$. Since $\tanh$ is odd, $\sigma_k\tanh(x_i) = \tanh(\sigma_k x_i) = \tanh(\eta_i)$. By the condition in Lemma \ref{lemma:cheaper_queue_input_alignment}, we have that $\tanh(\eta_i(t)) \ge \nu$. So, $\dot{y}_i + \lambda_i y_i \ge \nu$. We multiply both sides of the inequality by the integrating factor $e^{\lambda_it}$. Integrating over the decision interval $[t_k, t_{k+1})$ and dividing both sides by $e^{\lambda_i t_{k+1}}$ yields
    \begin{equation}
        y_i(t^-_{k+1}) \ge e^{-\lambda_i\Delta t_D}y_i(t_k) + \frac{\nu}{\lambda_i}(1 - e^{-\lambda_i\Delta t_D}). 
    \end{equation}
    Using the bound $y_{i}(t_{k}) \in [-1,1] \, \forall k$, we obtain 
    \begin{equation}
        y_i(t^-_{k+1}) \ge  -e^{-\lambda_i(\Delta t_D)} + \frac{\nu}{\lambda_i}(1 - e^{-\lambda_i\Delta t_D}) = \beta_i.
    \end{equation} 
    Thus, $\sigma_kz^{k+1}_i \ge \beta_i$. 
    Define $\beta := \min_i\{\beta_i\}$. 
    Bounding $\beta_i$ away from zero for all agents yields the sufficient condition
    \begin{equation}
        \Delta t_D > \max_i \left\{\frac{1}{\lambda_i}\ln\left(\frac{\lambda_i + \nu}{\nu}\right)\right\}.
    \end{equation}
    When the condition above is satisfied, every agent has $\sigma_kz^{k+1}_i \ge \beta$ and
    \begin{equation}
        |z^{k+1}_i| \ge \beta, \hspace{0.1cm} \forall i.
    \end{equation}
    Thus, when the decision interval is sufficiently long, every agent's opinion is uniformly bounded away from neutrality in favor of the cheaper queue.
\end{proof}

\begin{theorem}[Nash Hitting in Finite Expected Time]\label{thm:finite_hit_time}
    Assume $\mathcal{X}_0 \notin \mathcal{N}$. Let $W_k$ denote the set of agents in $W$ at $k$. Suppose there exists a constant $\zeta > 0$ such that when $n^k_W > 0$, there is at least one agent $i \in W_k$ with $|z^{k+1}_i| \ge \zeta$.
    If there exists an $\overline{m} > 0$ such that 
    \begin{equation}\label{eqn:suff_input_condition}
        \gamma_i b_{\#} \ge \omega_i(u_{0_i} + K) + \alpha_i\|\boldsymbol{A}\|_{\infty} + \overline{m}
    \end{equation}
    holds for all $i$, the decision interval duration $\Delta t_D$ satisfies the condition in Lemma \ref{lemma:opinion_magnitude_bound}, and there exists a constant $\psi \in (0,1)$ such that $|z^{k+1}_i| \leq \psi$, for all agents in the more expensive queue when $\mathcal{X}_k \notin \mathcal{N}$ and $n^k_W = 0$, 
    then the Markov chain $\{\mathcal{X}_k\}_{k \ge 0}$ will hit $\mathcal{N}$ in finite expected time.
\end{theorem}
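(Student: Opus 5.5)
We split the evolution of the chain into two phases: first emptying the waiting pool $W$, then reducing the queue imbalance once $n_W=0$. For each phase we show that, from any state outside $\mathcal{N}$, there is a uniform positive probability $p>0$ of making progress toward $\mathcal{N}$ within a bounded number of epochs. A geometric-trials argument then gives a finite expected hitting time. Equivalently, we build a Lyapunov-type function on a finite ordered "progress index" and apply a standard supermartingale/drift bound, e.g. $\mathbb{E}[\tau_{\mathcal{N}}]\le L/p$, where $L$ is the maximal index.

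\emph{Phase 1 (draining $W$).} While $n^k_W>0$, the hypothesis gives an agent $i\in W_k$ with $|z^{k+1}_i|\ge\zeta$. Since $U^k_i$ is independent of everything else, this agent joins some queue with probability at least $\zeta$. Agents never return to $W$, so $n_W$ is nonincreasing and strictly decreases with probability at least $\zeta$ each epoch. It therefore reaches $0$ after at most $N$ successes, and the expected time for this is at most $N/\zeta$. If the chain lands in $\mathcal{N}$ during this phase we are done. Otherwise we reach a state with $n_W=0$ and $|\Delta Q|\ge 2$.

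\emph{Phase 2 (reducing imbalance).} Take $\mathcal{X}_k\notin\mathcal{N}$ with $n^k_W=0$, and assume without loss of generality $\Delta Q_k\ge 2$, so $\sigma_k=-1$ and $A$ is the more expensive queue. By Lemma~\ref{lemma:cheaper_queue_input_alignment} and Lemma~\ref{lemma:opinion_magnitude_bound}, every agent has $\sigma_k z^{k+1}_i\ge\beta$. Hence every agent in $A$ has $\sgn(z^{k+1}_i)=-1\neq\ell^k_i$, and every agent in $B$ has a sign matching its location, so no agent in $B$ moves. Each agent in $A$ therefore switches independently with probability $|z^{k+1}_i|\in[\beta,\psi]$.

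We would consider the event that exactly $s:=\lfloor \Delta Q_k/2\rfloor$ agents of $A$ switch, or more generally any number $s$ that lands $|\Delta Q_{k+1}|=|\Delta Q_k-2s|\le\Delta Q_*$. The probability of this event is at least
\[
\beta^{s}(1-\psi)^{n_A-s}\ge \bigl(\beta\wedge(1-\psi)\bigr)^{N}=:p>0 .
\]
This is exactly where the upper bound $\psi<1$ is needed: it ensures that not every agent in $A$ moves, which would otherwise cause perpetual overshoot. Also, since the agents in $B$ stay put, the probability that the chain enters $\mathcal{N}$ at the next epoch is at least $p$, uniformly over all states outside $\mathcal{N}$ with $n_W=0$. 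The phase-2 states are revisited with overshoot, but the bound holds from every such state. Hence, by the strong Markov property, the number of phase-2 epochs is stochastically dominated by a geometric random variable with mean $1/p$.

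Combining the two phases gives $\mathbb{E}[\tau_{\mathcal{N}}]\le N/\zeta+1/p<\infty$. To do this rigorously, we apply the strong Markov property at the stopping time at which $W$ empties. Alternatively, the bound follows from the simpler uniform estimate $\mathbb{P}(\tau_{\mathcal{N}}\le k+N+1\mid\mathcal{F}_k)\ge\zeta^N p$, which holds because phase 1 needs at most $N$ successes.

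The main obstacle is the careful treatment of phase 2 when the chain lands in $\mathcal{N}$ with $n_W=0$ from a mixed situation: agents joining from $W$ may overshoot, so that $\sgn(b)$ flips. The uniform bound $p$ sidesteps any monotone-drift requirement, because it only requires that from \emph{every} non-Nash state the next step enters $\mathcal{N}$ with probability at least $p$. A second point to check is that Lemma~\ref{lemma:opinion_magnitude_bound} is applied with $b_k$ frozen over the interval, so that $\sigma_k$ is constant. This is guaranteed by the hybrid model.
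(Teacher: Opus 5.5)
Your proposal is correct and follows essentially the same two-phase argument as the paper: first bound the expected time to empty $W$ by $N/\zeta$, then use a uniform lower bound on the one-step probability of entering $\mathcal{N}$ once $n_W=0$ to get a geometric tail, for a total of $N/\zeta+1/p$. The differences are cosmetic. You count geometric successes where the paper uses the stopped supermartingale $n^{k\wedge\tau_W}_W+\zeta(k\wedge\tau_W)$ with monotone convergence, and you use the cruder constant $(\beta\wedge(1-\psi))^N$ in place of the paper's binomial minimum $p_{\mathcal{N}}$; you also spell out, via the sign conclusion of Lemma~\ref{lemma:opinion_magnitude_bound}, that agents in the cheaper queue never move, which the paper leaves implicit.
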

Using standard drift arguments \cite{meyn2012markov} and probability techniques \cite{williams1991probability}, we prove Theorem \ref{thm:finite_hit_time} in two parts. We first bound the expected number of epochs to empty $W$. Then, we bound the expected number of additional epochs to reach $\mathcal{N}$ once $W$ is empty.  
\begin{proof}
    Define the natural filtration $\mathcal{F}_k = \sigma(\mathcal{X}_0, \dots, \mathcal{X}_k)$. Consider the queueing system starting from an initial state $\mathcal{X}_0 \notin \mathcal{N}$ with $n^0_W \leq N$.
    Let $p^k_i = |z^{k+1}_i|$ denote the probability of agent $i \in W_k$ joining a queue during the decision interval $[t_{k}, t_{k+1})$, where $\sgn(z^{k+1}_i)$ indicates the destination. Define $\tau_W := \inf\{k \ge 0 \, | \, n^k_W = 0\}$ and
    $J^k_W$ as a sum of Bernoulli random variables denoting the number of agents that leave the waiting pool during the decision interval. 
    Since agents cannot join $W$, $n^{k+1}_W = n^k_W - J^k_W$ and the number of agents in $W$ is monotonically nonincreasing.
    By assumption, there is at least one $i \in W_k$ with $p^k_i \ge \zeta$, so
    \begin{equation*}
        \mathbb{E}[n^{k+1}_W - n^k_W \, | \, \mathcal{F}_k] = -\mathbb{E}[J^k_W \, | \, \mathcal{F}_k] = -\sum_{i \in W_k} p^k_i \leq - \zeta.
    \end{equation*}
    Define the stopped process $\mathcal{W}_k = n^{k \wedge \tau_W}_W + \zeta(k \wedge \tau_W)$. $\mathcal{W}_k$ is a supermartingale, and thus
    \begin{equation*}
        \mathbb{E}[n^{k \wedge \tau_W}_W + \zeta(k \wedge \tau_W)] \leq n^0_W.
    \end{equation*}
    Since $\mathbb{E}[n^{k \wedge \tau_W}_W] \ge 0$ and $n^0_W \leq N$, $\zeta \mathbb{E}[k \wedge \tau_W] \leq N.$
    $k \wedge \tau_W$ approaches $\tau_W$ almost surely as $k \rightarrow \infty$, so by the Monotone Convergence Theorem, we conclude
    \begin{equation}\label{eqn:expected_steps_to_clear_w}
        \mathbb{E}[\tau_{W} ] \leq N/\zeta. 
    \end{equation}
    Now, consider the process starting at $\tau_W$. Define $\tau_{\mathcal{N}} := \inf\{k \ge \tau_W \, | \, \mathcal{X}_k \in \mathcal{N}\}$. 
    Let $\widetilde{Q}_k$ denote the set of agents in the \emph{more expensive} queue at epoch $k$ and $|\widetilde{Q}_k|$ be its cardinality.
    Let $M_k$ be the number of switches from $\widetilde{Q}_k$ and $s_k$ be the number of switches from $\widetilde{Q}_k$ needed to hit $\mathcal{N}$ by the next epoch. 
    Given $\mathcal{F}_k$, $M_k$ is a sum of conditionally independent Bernoulli random variables, and by the conclusion of Lemma \ref{lemma:opinion_magnitude_bound}, and the assumption that $|z^{k+1}_i| \leq \psi$, $\forall i \in \widetilde{Q}_k$ when $\mathcal{X}_k \notin \mathcal{N}$ and $n^k_W = 0$, $\mathbb{P}(\mathcal{X}_{k+1} \in \mathcal{N} \, | \, \mathcal{F}_k) \ge  {|\widetilde{Q}_k| \choose s_k}\beta^{s_k}(1 - \psi)^{|\widetilde{Q}_k| - s_k}$.
    Thus, the probability that the Markov chain hits $\mathcal{N}$ by the \emph{next} epoch is uniformly bounded below as
    \begin{equation}
        p_{\mathcal{N}} := \min_{1 \leq q \leq N}\min_{1 \leq s \leq q}{q \choose s}\beta^{s}(1 - \psi)^{q - s}.
    \end{equation}
    Let $\Upsilon = \tau_{\mathcal{N}} - \tau_W$. So, for $\tau_W < k < \tau_{\mathcal{N}}$, $\mathbb{P}(\mathcal{X}_{k+1} \in \mathcal{N} \, | \, \mathcal{F}_k)  \ge p_{\mathcal{N}}$.
    Further, $\forall \, m \ge 0$, $\mathbb{P}(\Upsilon > m +1 \, | \, \mathcal{F}_{\tau_W + m}) \leq (1 - p_{\mathcal{N}})\mathds{1}_{\Upsilon > m}$. Taking the expectation of both sides yields 
    \begin{equation}\label{eqn:prob_bound_hit_n}
        \mathbb{P}(\Upsilon > m + 1) \leq (1- p_{\mathcal{N}})\mathbb{P}(\Upsilon > m).
    \end{equation}
    Induction on $m$ gives $\mathbb{P}(\Upsilon > m) \leq (1-p_{\mathcal{N}})^m$.
    By the tail-sum formula and geometric series, we have
    \begin{equation}\label{eqn:exp_additional_steps_to_N}
        \mathbb{E}[\Upsilon] = \sum^{\infty}_{m = 0} \mathbb{P}(\Upsilon > m) \leq \sum^{\infty}_{m = 0} (1 - p_{\mathcal{N}})^m = \frac{1}{p_{\mathcal{N}}}.
    \end{equation}
    Combining equations \eqref{eqn:expected_steps_to_clear_w} and \eqref{eqn:exp_additional_steps_to_N}, we conclude
    \begin{equation}
        \mathbb{E}[\tau_{\mathcal{N}}] \leq \frac{N}{\zeta} + \frac{1}{p_{\mathcal{N}}} < \infty. \qedhere
    \end{equation}
\end{proof}
The sufficient condition presented in the statement of Theorem \ref{thm:finite_hit_time} is restrictive, and there are many parameter regimes in which it may be violated but the chain still hits $\mathcal{N}$ in finite time. Further, we conjecture that anti-cooperative social network structures may even support \emph{persistence} in $\mathcal{N}$, i.e., provide more than  just finite-time hitting.
We investigate these notions in Section \ref{sec:numerical_sims} through numerical simulations.

\section{Numerical Experiments}\label{sec:numerical_sims}
In all experiments, we consider $N = 10$ identical agents choosing between identical queues $A$ and $B$. All agents are initially in $W$ (i.e., $\boldsymbol{\ell}_0 = \boldsymbol{0}$), and we set both $\mu_A = \mu_B = 0$ to focus on the simplest queue-selection congestion game. 
We run the ABQM simulation over a finite time horizon $T = 30$ with decision interval $\Delta t_D = 0.1$.
We integrate the NOD ODEs using a Runge-Kutta-45 (RK-45) scheme with timestep $\Delta t = 0.01$.
For all agents, we fix $\gamma_i = 0.5$, $\omega_i = 1$, $\lambda_i = 1$, and $\alpha_i = 0.2$. 
This regime corresponds to agents whose opinions are shaped by joint influences from social connections, self-reinforcement, and queue imbalance information.
The critical parameters are $K^* = 1/3$ and
$u^*_{0_i} = 1$ for the anti-cooperative network. 
Taking $u_{0_i} = 1.25$ and $K = 0.25$ for all $i$ places the system slightly above the bifurcation point and ensures a supercritical pitchfork. 
For each agent, $z^0_i$ is sampled from a mean-zero normal distribution with standard deviation $0.1$. 

Our simulations aim to explore two main questions:
\begin{enumerate}
    \item Are cooperative or anti-cooperative network structures more conducive to stabilizing queue balance?
    \item How does limited access to queue information affect the system's ability to hit and persist in $\mathcal{N}$?
\end{enumerate}
We consider fully cooperative networks ($\boldsymbol{A}_+ := \boldsymbol{1}\boldsymbol{1}^T$) and anti-cooperative networks ($\boldsymbol{A}_- := -\boldsymbol{1}\boldsymbol{1}^T$). 
To model incomplete information, we set $b_k = 0$, $\forall k$ for $N_b = \rho N$ randomly selected agents, where $\rho \in \{0, 0.2, 0.4, 0.6, 0.8, 1\}$. 
For each network structure and $\rho$, we run $10,000$ trials and record the average hitting time ($\overline{\tau}_{\mathcal{N}}$), its standard deviation ($\sigma(\overline{\tau}_{\mathcal{N}}))$, fraction of trials that hit $\mathcal{N}$ ($r$), average number of queue switches per agent ($\overline{S}$), and the average amount of time that the system stays at $\mathcal{N}$ after the last recorded hit ($\overline{T}_{\mathcal{N}}$). 
Tables \ref{tab:simulation_stats_coop} and \ref{tab:simulation_stats_anticoop} present these statistics (rounded to two decimal places).

\begin{table}[t]
\centering
\caption{Summary Statistics: Cooperative Network}
\label{tab:simulation_stats_coop}
\footnotesize
\begin{tabular}{l|ccccc}
\toprule
 $\rho$ & $\overline{\tau}_{\mathcal{N}}$ & $\sigma(\overline{\tau}_{\mathcal{N}})$ & $r$ & $\overline{S}$ & $\overline{T}_{\mathcal{N}}$ \\
\midrule
1: $\rho = 0$ & 2.96 & 1.12 & 1 & 21.96 & 0.06 \\
2: $\rho = 0.2$ & 3.01 & 1.17 & 1 & 18.01 & 0.12 \\
3: $\rho = 0.4$ & 2.95 & 1.13 & 1 & 12.56 & 0.22 \\
4: $\rho = 0.6$ & 3.45 & 1.19 & 0.74 & 1.58 & 0 \\
5: $\rho = 0.8$ & 2.8 & 1.05 & 0.12 & 0.47 & 0 \\
6: $\rho = 1$ & 1.958 & 0.50 & 0.003 & 0.12 & 0  \\
\bottomrule
\end{tabular}
\end{table}

\begin{table}[t]
\centering
\caption{Summary Statistics: Anti-Cooperative Network}
\label{tab:simulation_stats_anticoop}
\footnotesize
\begin{tabular}{l|ccccc}
\toprule
 $\rho$ & $\overline{\tau}_{\mathcal{N}}$ & $\sigma(\overline{\tau}_{\mathcal{N}})$ & $r$ & $\overline{S}$ & $\overline{T}_{\mathcal{N}}$ \\
\midrule
1: $\rho = 0$ & 3.01 & 1.25 & 1 & 4.51 & 16.92 \\
2: $\rho = 0.2$ & 3.07 & 1.21 & 1 & 4.53 & 17.03 \\
3: $\rho = 0.4$ & 3.17 & 1.29 & 1 & 3.88 & 17.68 \\
4: $\rho = 0.6$ & 3.43 & 1.63 & 0.9 & 2.05 & 20.46 \\
5: $\rho = 0.8$ & 5.74 & 4.00 & 0.76 & 0.51 & 22.50 \\
6: $\rho = 1$ & 7.32 & 4.53 & 0.86 & 0.04 & 22.56  \\
\bottomrule
\end{tabular}
\end{table}

\textit{Discussion:} Data in Tables \ref{tab:simulation_stats_coop} and \ref{tab:simulation_stats_anticoop}, indicate that the network structure itself does not have a significant influence on whether the agents are \emph{able} to hit a Nash configuration in finite time, but anti-cooperative network structures are more conducive to queue balance stabilization overall. Even when less than half of the population (e.g., $\rho \ge 0.6$) has access to imbalance information, the anti-cooperative agents still reach a Nash configuration at a high rate, while cooperative agents in limited information regimes often fail to hit $\mathcal{N}$ before $T$. 

Persistence in $\mathcal{N}$ is heavily influenced by both network structure and information access.
Cooperative agents do not exhibit any notable persistence in $\mathcal{N}$, while anti-cooperative agents always persist in $\mathcal{N}$ and the duration \emph{increases} as information access decreases.
Figures \ref{fig:anti_op_queue_rho_0pt6} and \ref{fig:full_op_queue_rho_0pt6} illustrate this behavior when $\rho = 0.6$.    
 \begin{figure}[h]
     \centering
     \includegraphics[width=0.95\columnwidth]{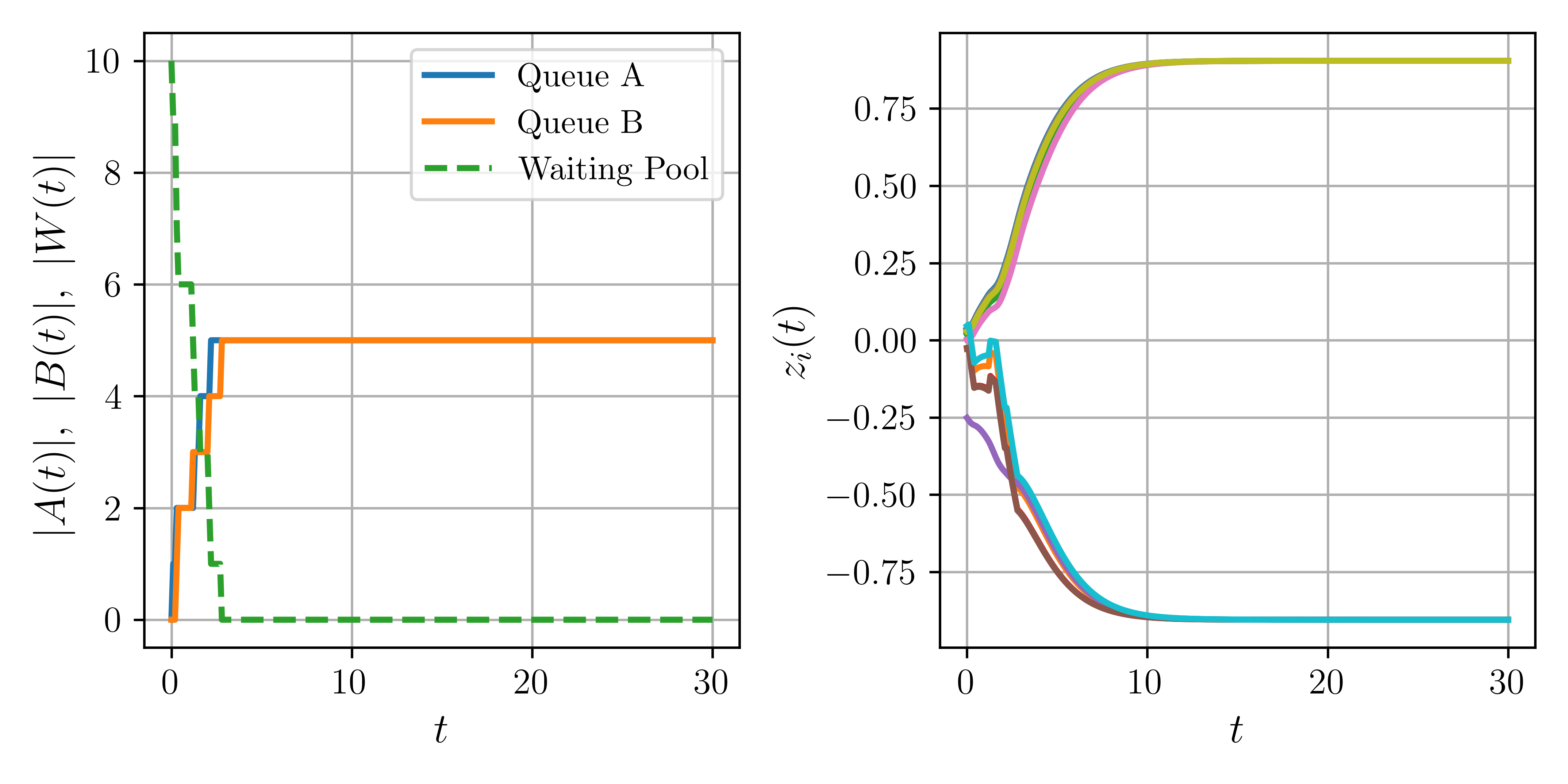}
     \vspace{-0.4cm}
     \caption{Queue lengths (left) and per-agent opinions (right) over time for $10$ anti-cooperative agents. $6$ agents lack queue imbalance information. Agents polarize quickly, do not switch queues, and easily settle in $\mathcal{N}$.}
     \label{fig:anti_op_queue_rho_0pt6}
 \end{figure}
 \begin{figure}[h]
     \centering    \includegraphics[width=0.95\columnwidth]{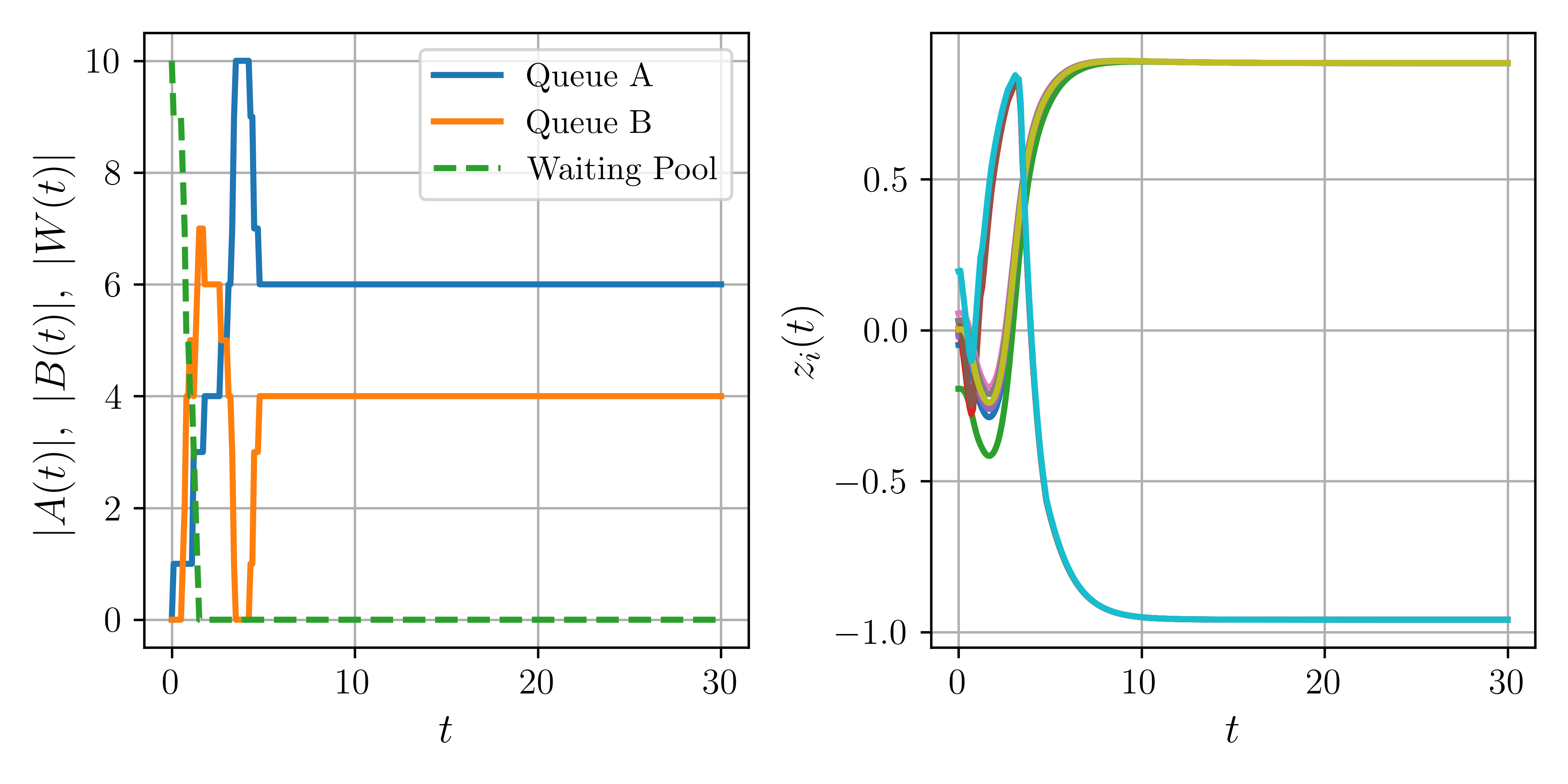}
     \vspace{-0.4cm}
     \caption{Queue lengths (left) and per-agent opinions (right) over time for $10$ fully-cooperative agents when $6$ agents lack imbalance information. After initial herding near $t = 4$ where all agents end up in $A$, the agents rapidly settle into two distinct groups based on their access to imbalance information.}
     \label{fig:full_op_queue_rho_0pt6}
 \end{figure}
Despite only four agents having information access, the anti-cooperative agents quickly polarize into equal queue lengths without any switching.
This illustrative example and the data in Table \ref{tab:simulation_stats_anticoop} suggest that \emph{anti-cooperative} agents can successfully \emph{learn} a Nash equilibrium of the underlying queue-selection congestion game in finite time.
On the other hand, a group of cooperative agents herd around $t = 4$ when four agents simultaneously move from $B$ to $A$, resulting in \emph{all} agents ending up in the same queue. 
Then, the four \emph{informed} agents
quickly react by settling in $B$ to avoid the high imbalance cost in $A$.
This behavior reflects a fundamental tradeoff between responding rationally to environmental information and maintaining commitment to social connections,
particularly in cooperative networks. 
\section{Conclusion and Future Directions}\label{sec:conclusion}
We introduced an agent-based queueing framework where each agent's actions are driven by a continuously evolving internal opinion state. We proved a sufficient condition under which the system hits a Nash equilibrium queue configuration in finite expected time, and we numerically explored how social network structure and information access influence agent behavior. 
Our results suggest that network structure has limited impact on the group's ability to reach a Nash equilibrium in finite time, but it influences post-hitting behavior.
Anti-cooperative networks support persistence in a Nash configuration after the final Nash hit. Further, as information access \emph{decreases}, anti-cooperative agents tend to settle into persistent polarization more quickly, while cooperative agents herd and split into groups reflecting their access to queue information. 
Future studies will analyze mixed-sign social networks and extend the Markov chain analysis to non-identical queues. We will also expand the framework to accommodate priority service ordering and compare our results to real data.

\section*{Acknowledgment}
The authors thank Vaibhav Srivastava for the helpful discussion about the supermartingale arguments in Section IV. 

\bibliographystyle{IEEEtran}
\bibliography{bibl}

\end{document}